\newcommand{\E}{\operatorname{E}}
\newcommand{\expec}{\E}
\newcommand{\EE}{\mathds{E}}
\newcommand{\RR}{\mathds{R}}
\newcommand{\reals}{\RR}
\newcommand{\dto}{\xrightarrow{d}}
\newcommand{\vto}{\xrightarrow{v}}
\newcommand{\wto}{\xrightarrow{w}}
\newcommand{\diff}{\mathrm{d}}
\newcommand{\1}{I}
\begin{document}
\title{MAX-STABLE MODELS FOR MULTIVARIATE EXTREMES}
\renewcommand{\titleheading}
             {Max-stable models}  
\author{\authoraddress{Johan Segers}
                      {ISBA,
                       Universit\'e catholique de Louvain,\\
                       Belgium
                       \ (johan.segers@uclouvain.be)}
}
\renewcommand{\authorheading}
             {Johan Segers}  

\maketitle

\begin{abstract}
Multivariate extreme-value analysis is concerned with the extremes in a multivariate random sample, that is, points of which at least some components have exceptionally large values. Mathematical theory suggests the use of max-stable models for univariate and multivariate extremes. A comprehensive account is given of the various ways in which max-stable models are described. Furthermore, a construction device is proposed for generating parametric families of max-stable distributions. Although the device is not new, its role as a model generator seems not yet to have been fully exploited.
\end{abstract}

\begin{keywords}
Copula; domain of attraction; max-stable distribution; spectral measure; tail dependence.
\end{keywords}

\begin{ams}
60G70, 62G32
\end{ams}

\mainpaper  

\section{INTRODUCTION}  
\label{Sec1}              

Multivariate extreme-value analysis is concerned with the extremes in a multivariate random sample, that is, points of which at least some components have exceptionally large values. Isolating a single component brings us back to univariate extreme-value theory. In this paper, the focus will rather be on the dependence between extremes in different components. The issue of temporal dependence will be ignored, so that the dependence will be understood as cross-sectional only.

Mathematical theory suggests the use of max-stable models for univariate and multivariate extremes. The univariate margins must be one of the classical extreme-value distributions, Fr\'echet, Gumbel, and extreme-value Weibull, unified in the generalized extreme-value distributions. For the dependence structure, however, matters are more complicated. A complete characterization in the multivariate case was given in \cite{dehaan:resnick:1977}, describing extremal dependence in terms of spectral measures on a subset of the unit sphere. Statistically, this formulation is not always the most practical one, and a large number of other concepts have been proposed.

The aim of this paper is to provide a comprehensive account of the various ways in which max-stable models are described (Section~\ref{sec:fundamentals}). Second, a construction device is proposed for generating parametric families of max-stable distributions (Section~\ref{sec:models}). The device is not new as it appears already for instance in the theory of regularly varying multivariate time series in \cite{basrak:segers:2009} or in the concept of a $D$-norm in \cite{falk:huesler:reiss:2010}. Still, its role as a model generator seems not yet to have been fully appreciated.

Inference on multivariate extremes via max-stable models for joint tails is an exciting field that is still in development. Inference methods can be nonparametric or parametric, and in the latter case, they can be likelihood-based, frequentist as well as Bayesian, or based on other techniques such as the method of moments or minimum distance estimation.

Max-stable models have the drawback that they are too coarse to describe tails of multivariate distributions with asymptotic independence sufficiently accurately. More refined models exist and they are the topic of an extensive literature, originating from the seminal paper of \cite{ledford:tawn:1996}.

\section{FUNDAMENTALS}

\label{sec:fundamentals}

Max-stable distributions arise out of the study of the limit behaviour of vectors of componentwise maxima. Their dependence structure can be described via a number of dependence functions or via certain measures. All these objects are related and they highlight different aspects of the distribution of multivariate extremes.

\subsection{Convergence of vectors of componentwise maxima}


Let $\bm X_i = (X_{i1}, \ldots, X_{id})$, for $i = 1, \ldots, n$, be independent and identically distributed random vectors in dimension $d$. For each component $j = 1, \ldots, d$, consider the sample maximum, and collect these maxima in a new random vector:
\begin{align*}
  \bm M_n &= (M_{n1}, \ldots, M_{nd}), &
  M_{nj} &= \max(X_{1j}, \ldots, X_{nj}).
\end{align*}  
Observe that most of the time, the point $\bm M_n$ does not belong to the sample, as the maxima in different components will typically occur at different time instances. Still, the analysis of the large-sample distribution of $\bm M_n$ is a natural starting point for multivariate extreme-value theory.

Weak convergence of a sequence of random vectors implies weak convergence of each of the components. As in univariate extreme-value theory, it is therefore reasonable to apply increasing affine transformations to each of the margins and consider the sequence of random vectors
\begin{equation}
\label{eq:maxima}
  \biggl( \frac{M_{nj} - b_{nj}}{a_{nj}} : j = 1, \ldots, d \biggr), \qquad n = 1, 2, \ldots
\end{equation}
in terms of normalizing constants $a_{nj} > 0$ and $b_{nj}$. For each component $j = 1, \ldots, d$, the weak limit of $(M_{nj}- b_{nj}) / a_{nj}$ as $n \to \infty$ must be a univariate max-stable distribution, and necessary and sufficient conditions on the marginal distribution $F_j$ of the $j$th component $X_{ij}$ for such convergence to take place are well known, see for instance the monograph by \cite{dehaan:ferreira:2006}.

However, weak convergence of each of the $d$ components in \eqref{eq:maxima} is strictly weaker than joint convergence of the vector of normalized maxima. What is needed in addition is a condition on the dependence structure of the common joint distribution $F$ of the random vectors $\bm X_i$. A convenient way to describe this dependence is via the copula $C_1$ of $F$, that is,
\begin{equation}
\label{eq:Sklar}
  \Pr[ \bm X_i \le \bm x ] = F ( \bm x ) = C_1 \bigl( F_1(x_1), \ldots, F_d(x_d) \bigr).
\end{equation}
Assuming the margins $F_1, \ldots, F_d$ are continuous, as we will do henceforth, the copula $C_1$ of the distribution function $F$ in \eqref{eq:Sklar} is unique and can be obtained as the joint distribution function of the random vector $(F_1(X_{i1}), \ldots, F_d(X_{id}))$.

Weak convergence of a sequence of multivariate distribution functions to a limit with continuous margins is equivalent to weak convergence of the sequences of margins and of the sequence of copulas \citep{deheuvels:1984,galambos:1987}. The copula of the vector of component-wise maxima $\bm M_n$, and hence of any vector that is obtained by applying increasing transformations to each of its components, is given by
\begin{equation}
\label{eq:Cn}
  C_n(\bm u) = \bigl\{ C_1\bigl(u_1^{1/n}, \ldots, u_d^{1/n}\bigr) \bigr\}^n.
\end{equation}
This can be checked from the fact that the joint distribution function of $\bm M_n$ is $F^n$ while its margins are $F_j^n$ for $j = 1, \ldots, d$. Hence, in order for the normalized maxima in \eqref{eq:maxima} to converge in distribution to a nondegenerate limit, besides marginal convergence, the sequence of copulas $C_n$ must converge as well.

The copulas that can arise as weak limits of $C_n$ as $n \to \infty$ are called extreme-value copulas, that is, a copula $C$ is called an extreme-value copula if there exists a copula $C_1$ such that, as $n \to \infty$,
\begin{equation}
\label{eq:EVC}
  \lim_{n \to \infty} \bigl\{ C_1\bigl(u_1^{1/n}, \ldots, u_d^{1/n} \bigr) \bigr\}^n = C(u_1, \ldots, u_d).
\end{equation}
Extreme-value copulas arise as the class of possible limit copulas of vectors $\bm M_n$ as $n \to \infty$. The copula $C_1$ is said to be in the domain of attraction of $C$. An extensive survey of the literature on extreme-value copulas is given in \cite{gudendorf:segers:2010}.

The class of extreme-value copulas coincides with that of max-stable copulas, defined as follows. A copula $C$ is max-stable if, for all $\bm u \in [0, 1]^d$ and $k = 1, 2, \ldots$,
\[
  C(\bm u) = \bigl\{ C \bigl( u_1^{1/k}, \ldots, u_d^{1/k} \bigr) \bigr\}^k.
\]
In the setting of componentwise maxima of independent random samples, the previous identity means that the copula $C_k$ of the random vector $\bm M_k$ is the same for every sample size $k$. Clearly, a max-stable copula is also an extreme-value copula, being in its own domain of attraction. Conversely, each extreme-value copula can be shown to be max-stable: in \eqref{eq:EVC}, partition the sample of size $n = mk$ in $m$ blocks of size $k$ and let $m$ tend to infinity for fixed $k$. Since the limit must not depend on $k$, the max-stability relation follows.

In summary, we have found that nondegenerate limit distributions of vectors of appropriately normalized componentwise maxima have extreme-value margins and an extreme-value or max-stable copula. Specifically, if
\[
  \Pr \biggl[ \bigcap_{j=1}^d \biggl\{ \frac{M_{nj} - b_{nj}}{a_{nj}} \le x_j \biggr\} \biggr]
  \xrightarrow{w} G(x_1, \ldots, x_d), \qquad n \to \infty,
\]
then necessarily
\[
  G(x_1, \ldots, x_d) = C \bigl( G_1(x_1), \ldots, G_d(x_d) \bigr)
\]
with extreme-value margins $G_1, \ldots, G_d$ and an extreme-value copula $C$. Convergence of margins and convergence of copulas being two isolated issues, we can ignore the former and rather focus on the latter. In fact, the way in which the components are normalized is immaterial, as long as the transformations applied to the components are increasing.

\subsection{Dependence functions}

Take logarithms and apply a linear expansion to see that \eqref{eq:EVC} is equivalent to 
\begin{multline}
\label{eq:C2ell}
  \lim_{n \to \infty} n \{ 1 - C_1(1 - n^{-1} x_1, \ldots, 1 - n^{-1} x_d) \} \\
  = - \log C(e^{-x_1}, \ldots, e^{-x_d}) 
  = \ell(\bm x), \qquad \bm x \in [0, \infty)^d.
\end{multline}
The limit $\ell$ is called the stable tail dependence function of $C$, going back to \cite{huang:1992} and \cite{drees:huang:1998}. The variable $n$ tending to infinity along the positive integers can be replaced by a variable $t$ tending to infinity along the positive reals.

The best known example is the Gumbel--Hougaard copula, for which $\ell_\theta(\bm x) = (x_1^\theta + \cdots + x_d^\theta)^{1/\theta}$ in terms of a parameter $\theta \in [1, \infty]$ \citep{gumbel:1960,hougaard:1986}. The function $\ell_\theta$ happens to be the $\theta$-norm of the vector $\bm x$. The fact that $\ell_\theta$ is a norm is no coincidence: in a remarkable paper by \cite{molchanov:2008}, a characterization is given of all the norms that can give rise to stable tail dependence functions. In \cite{falk:huesler:reiss:2010}, $\ell(\bm x)$ is called the $D$-norm of $\bm x$, with $D$ referring to the Pickands dependence function, see below.

Let $\bm X = (X_1, \ldots, X_d)$ denote a generic random vector in the original sample. The expression on the left-hand side in \eqref{eq:C2ell} contains the rescaled probability
\begin{multline}
\label{eq:or}
  1 - C_1(1 - x_1/n, \ldots, 1 - x_d/n) \\
  = \Pr[ F_1(X_{1}) > 1 - x_1/n \quad \text{or} \quad \ldots \quad \text{or} \quad F_d(X_{d}) > 1 - x_d/n ].
\end{multline}
This probability concerns the event that at least one among the $d$ components $X_{1}, \ldots, X_{d}$ should exceed a high percentile of its own distribution. The copula domain-of-attraction condition \eqref{eq:EVC}, originally involving the vector of componentwise sample maxima, has been replaced by a condition on the upper tail of a single random vector. This is akin to the familiar peaks-over-threshold approach in univariate extreme-value theory.

The tail copula, $R$, of \cite{schmidt:stadtmueller:2006} arises if all $d$ components are required to exceed a large percentile simultaneously:
\begin{multline*}
  \lim_{n \to \infty} n \Pr[ F_1(X_1) > 1 - x_1/n \quad \text{and} \quad \ldots \quad \text{and} \quad F_d(X_d) > 1 - x_d/n ] \\
  = R(\bm x), \qquad \bm x \in [0, \infty)^d.
\end{multline*}
Clearly, the relation between the functions $\ell$ and $R$ is governed by the inclusion-exclusion formula. In higher dimensions, $\ell$ is somewhat more convenient than $R$, as setting some components $x_j$ in the definition of $\ell$ to zero allows one to retrieve the lower-dimensional margins of the extreme-value copula. This is not possible for the tail copula $R$, as setting even a single $x_j$ to zero immediately yields $R(\bm x) = 0$. The difference between the two functions $\ell$ and $R$ is depicted in Figure~\ref{fig:lR}.

\begin{figure}
\begin{center}
\begin{tabular}{cc}
\includegraphics[width=0.45\textwidth]{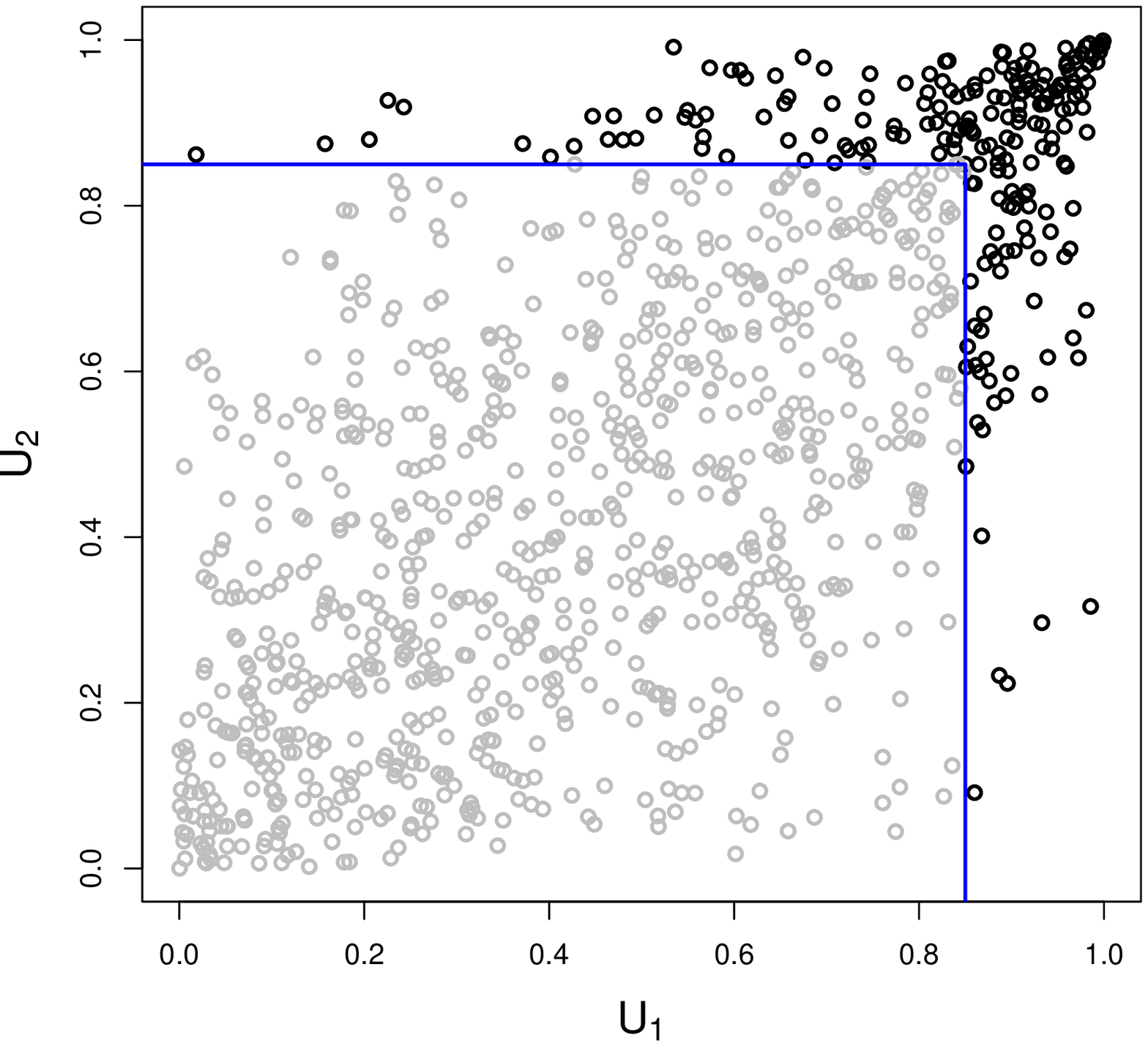}&
\includegraphics[width=0.45\textwidth]{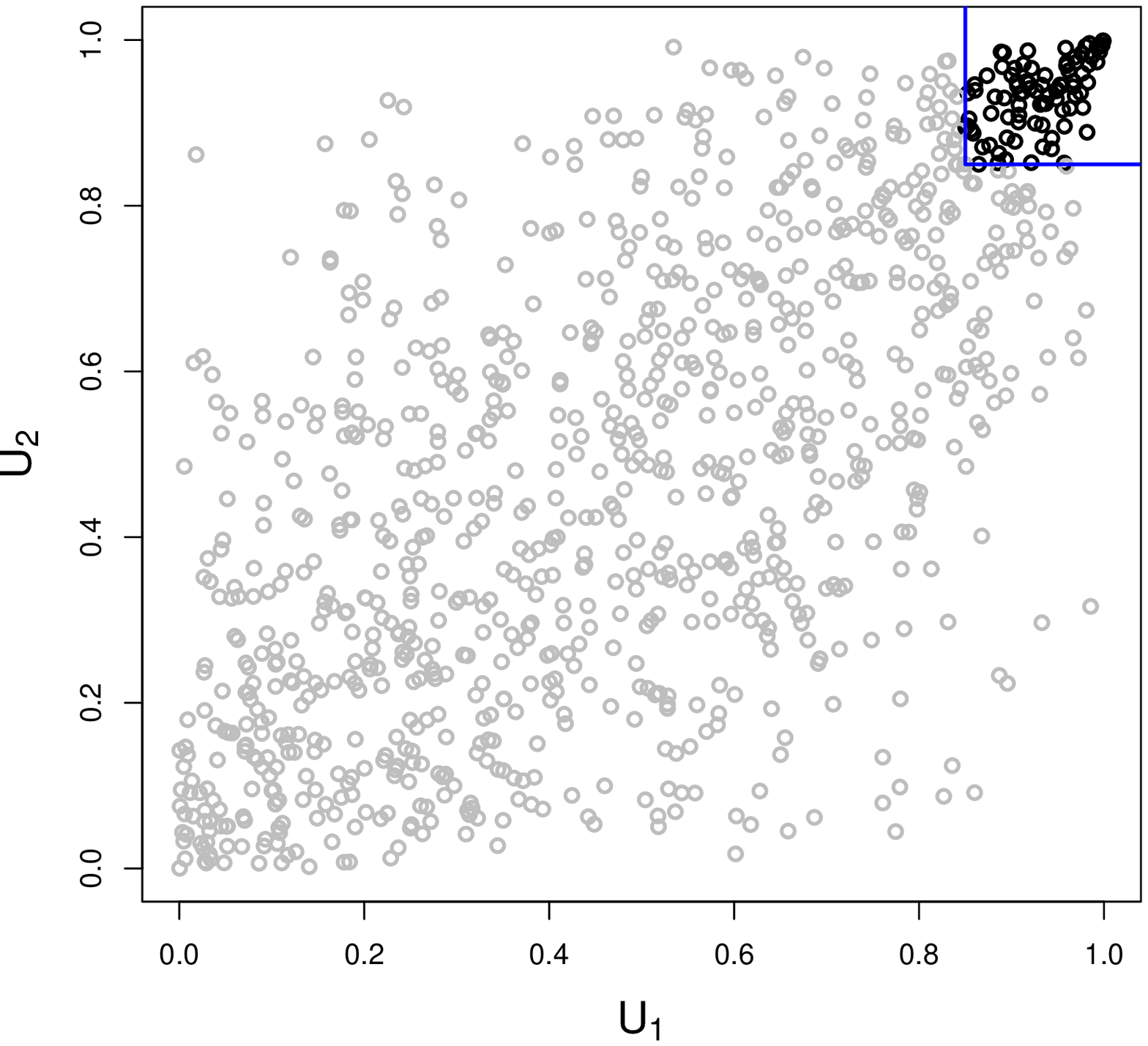}\\[1ex]
$U_1$ or $U_2$ are large&
$U_1$ and $U_2$ are large
\end{tabular}
\caption{Left: the stable tail dependence function $\ell$ -- at least one component should be large. Right: the tail copula $R$ -- all components must be large simultaneously. Here $U_j = F_j(X_j)$ for $j = 1, 2$. \label{fig:lR}}
\end{center}
\end{figure}

The dependence functions $\ell$ and $R$ are homogeneous:
\begin{align}
  \ell(a \, \bm x)
  &= \lim_{t \to \infty} t \, \bigl\{ 1 - C_1 \bigl( 1 - x_1/ (t/a), \ldots, 1 - x_d/ (t/a) \bigl) \bigr\} \nonumber \\
  &= \lim_{s \to \infty} a \, s \, \bigl\{ 1 - C_1(1 - x_1/s, \ldots, 1 - x_d/s) \bigr\} \nonumber \\
  &= a \, \ell(\bm x), \qquad a > 0, \; \bm x \in [0, \infty)^d, \label{eq:ellhomo}
\end{align}
and similarly for $R$. It is therefore sufficient to consider the restriction of these functions to the unit simplex $\Delta_{d-1} = \{ (w_1, \ldots, w_d) \in [0, 1]^d : w_1 + \cdots + w_d = 1 \}$. The restriction of $\ell$ to $\Delta_{d-1}$ is called the Pickands dependence function, $D$, after \cite{pickands:1981}. By homogeneity,
\begin{align*}
  \ell(\bm x) &= (x_1 + \cdots + x_d) \, D(w_1, \ldots, w_d), & w_j &= \frac{x_j}{x_1 + \cdots + x_d}.
\end{align*}
Frequently, the variable $w_d = 1 - w_1 - \cdots - w_{d-1}$ is suppressed from the notation and $D$ is written as a function of the arguments $(w_1, \ldots, w_{d-1})$ only.

The probability on the right-hand side of \eqref{eq:or} involves the union of the events $\{ F_j(X_j) > 1 - x_j/n \}$, each of which has probability $x_j / n$, provided $0 \le x_j \le n$. As a consequence, we have the elementary bounds
\begin{multline*}
  \max(x_1/n, \ldots, x_d/n) \\
  \le \Pr[ F_1(X_1) > 1 - x_1/n \quad \text{or} \quad \ldots \quad \text{or} \quad F_d(X_d) > 1 - x_d/n ] \\
  \le x_1/n + \cdots + x_d/n.
\end{multline*}
Multiplying by $n$ and letting $n$ tend to infinity, we obtain
\begin{equation}
\label{eq:bounds:ell}
  \max(x_1, \ldots, x_d) \le \ell(x_1, \ldots, x_d) \le x_1 + \cdots + x_d, \qquad \bm x \in [0, \infty)^d.
\end{equation}
By \eqref{eq:C2ell} and \eqref{eq:bounds:ell}, an extreme-value copula $C$ must satisfy 
\begin{equation}
\label{eq:bounds:C}
  u_1 \cdots u_d \le C(u_1, \ldots, u_d) \le \max(u_1, \ldots, u_n), \qquad \bm u \in [0, 1]^d.
\end{equation}
The lower and upper bounds in the two previous displays can be attained, corresponding to the extreme cases of independence and perfect association. In particular, max-stable models are positive quadrant dependent. In fact, in \cite{guillem:2000}, the stronger property is shown that bivariate extreme-value copulas are monotone regression dependent; see also Theorem~5.2.10 in \cite{resnick:1987}. 

By \eqref{eq:C2ell}, the copula $C$ can be given in terms of the tail dependence function through
\[
  C(u_1, \ldots, u_d) = \exp \{ - \ell ( - \log u_1, \ldots, - \log u_d ) \}, \qquad \bm u \in (0, 1]^d.
\]
In extreme-value theory, it is often convenient to standardize to other distributions than the uniform $(0, 1)$ law. The three most common forms are the unit Fréchet distribution, the Gumbel distribution, and the reverse exponential distribution, yielding respectively
\begin{align}
\label{eq:C:frechet}
  C(e^{-1/x_1}, \ldots, e^{-1/x_d}) &= \exp \{- \ell ( 1/x_1, \ldots, 1/x_d) \}, & \bm x &\in (0, \infty)^d, \\
\label{eq:C:gumbel}
  C(e^{-e^{-x_1}}, \ldots, e^{-e^{-x_d}}) &= \exp \{ - \ell (e^{-x_1}, \ldots, e^{-x_d}) \}, & \bm x &\in \reals^d, \\
\label{eq:C:weibull}
  C(e^{x_1}, \ldots, e^{x_d}) &= \exp \{ - \ell(-x_1, \ldots, -x_d) \}, & \bm x &\in (-\infty, 0)^d.
\end{align}
When using unit Fréchet margins, the notation $V(x_1, \ldots, x_d) = \ell(1/x_1, \ldots, 1/x_d)$ is often employed too.

\subsection{The intensity measure}

The transformation of the components $X_j$ to uniform $(0, 1)$ random variables via the probability integral transform $F_j(X_j)$ has the disadvantage that all the action regarding the upper extremes is compressed to the neighbourhood of $1$. Instead, for a univariate sequence $\xi_1, \xi_2, \ldots$ of independent and identically distributed random variables with common distribution function $F_\xi$, define the first exceedance time of the level $x$ by
\[
  T(x) = \inf \{ i = 1, 2, \ldots : \xi_i > x \}.
\]
If $F_\xi(x) < 1$, then $T(x)$ will be a geometric random variable with success probability equal to $1 - F_\xi(x)$. Its expectation,
\[
  \E[T(x)] = \frac{1}{1-F_\xi(x)},
\]
is called the return time of the level $x$.

Now let us apply this return time transformation to each of the $d$ components of the random vector $(X_1, \ldots, X_d)$. The return time of observation $X_j$ is $Y_j = 1 / \{ 1-F_j(X_j) \}$. The law of $Y_j$ is unit Pareto rather than uniform on $(0, 1)$, as $\Pr[Y_j  > y] = \Pr[F_j(X_j) > 1-1/y] = 1/y$ for $y \geq 1$. We find that values of $X_j$ corresponding to high percentiles of $F_j$ are mapped to large values of $Y_j$. As is evident from Figure~\ref{fig:returntime}, extremes are magnified.

\begin{figure}
\begin{center}
\begin{tabular}{cc}
\includegraphics[width=0.45\textwidth]{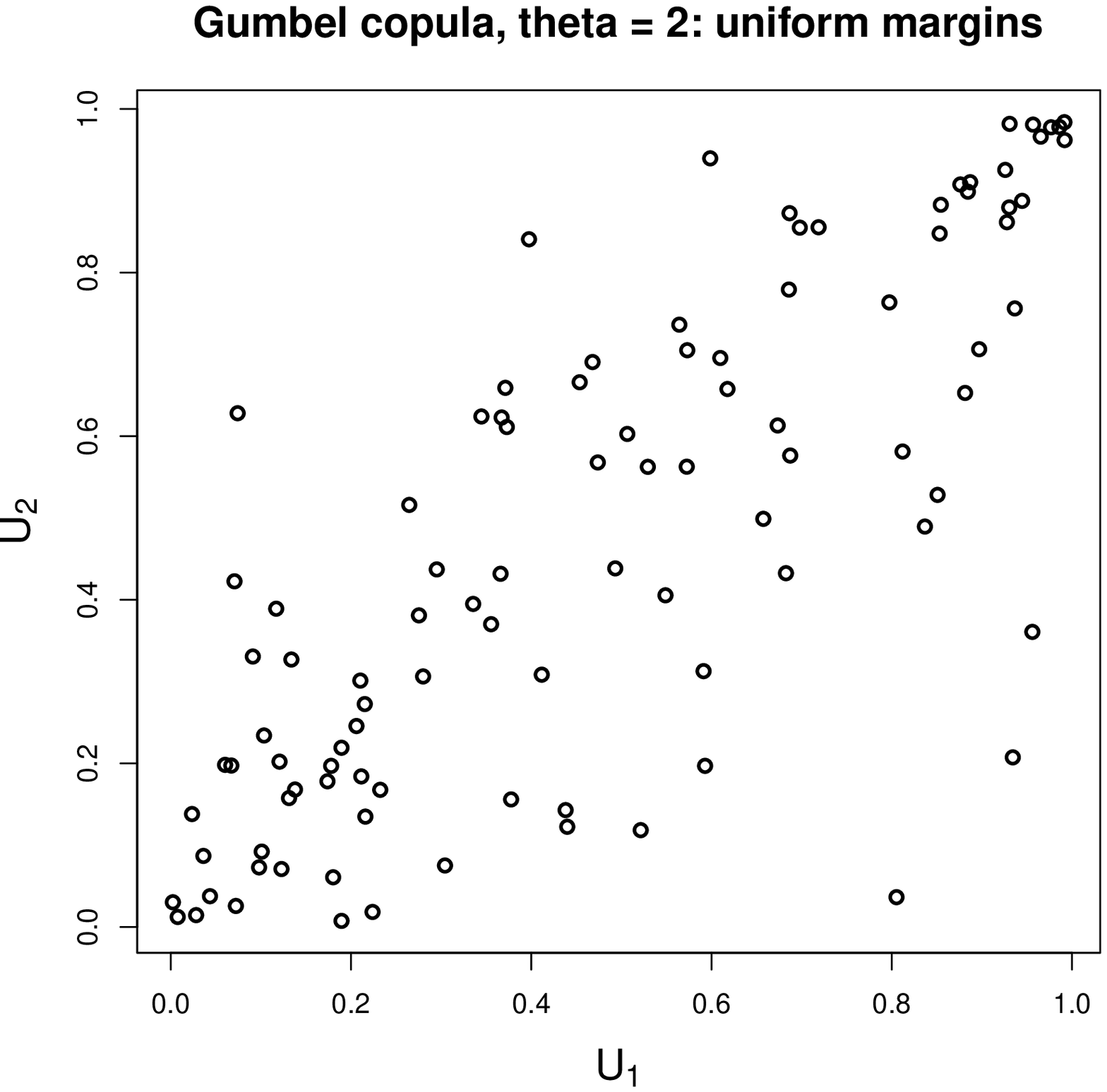}&
\includegraphics[width=0.45\textwidth]{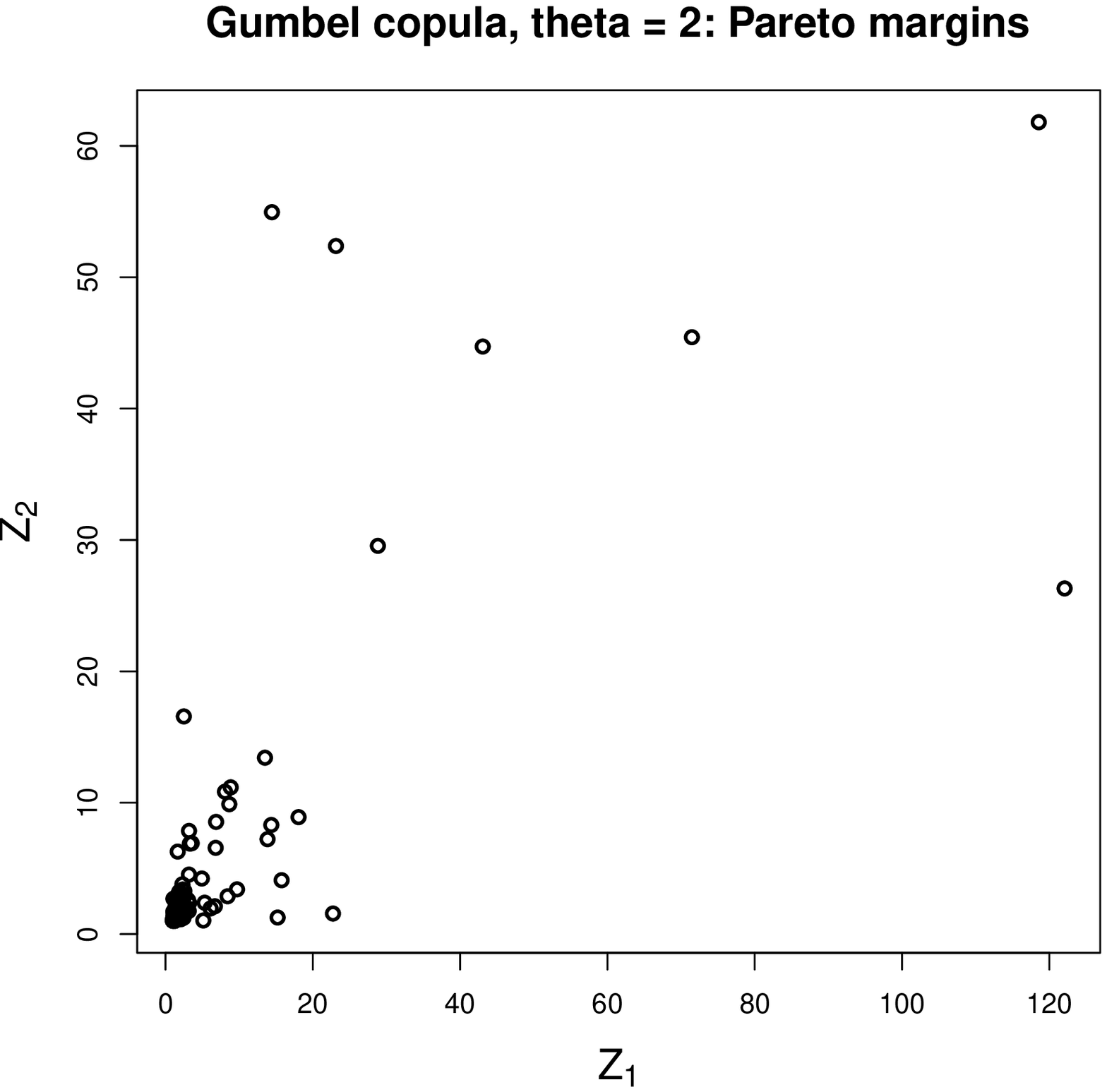}\\
$U_{ij} = F_j(X_{ij})$&
$Z_{ij} = 1 / \{ 1 - F_j(X_{ij}) \}$
\end{tabular}
\caption{Two views on a sample: uniform (left) versus Pareto (right)}
\label{fig:returntime}
\end{center}
\end{figure}


Now suppose that the copula $C_1$ is in the domain of attraction of an extreme-value copula with stable tail dependence function $\ell$. Equation~\eqref{eq:C2ell} says that the random vector $\bm Y = (Y_1, \ldots, Y_d)$ satisfies
\begin{align*}
  t \, \{ 1 - C_1(1 - x_1/t, \ldots, 1 - x_d/t) \}
  &= t \, \Pr \biggl[ \bigcup_{j=1}^d \{ Y_j > t / x_j \} \biggr] \\
  &= t \, \Pr \bigl[ \bm Y/t \in  \bigl( [0, \infty]^d \setminus [\bm 0, 1/\bm x] \bigr) \bigr] \\
  &\to \ell(\bm x), \qquad t \to \infty.
\end{align*}
It follows that on the space $\EE_d = [0, \infty]^d \setminus \{ \bm 0 \}$, there exists a measure $\mu$ such that
\begin{equation}
\label{eq:vague2mu}
  \expec \biggl[ \sum_{i=1}^n \1 (\bm Y_i/n \in \, \cdot \,) \biggr]
  = n \, \Pr[ \bm Y/n \in \, \cdot \, ] \vto \mu(\,\cdot\,), \qquad n \to \infty.
\end{equation}
The limit takes place in the mode of vague convergence of measures, meaning that $\lim_{n \to \infty} n \expec[ f(\bm Y / n) ] = \int_{\EE_d} f( \bm x ) \, \diff \mu(\bm x)$ for every bounded, continuous function $f$ on $\EE_d$ that vanishes in a neighbourhood of the origin. Intuitively, when $n$ grows large, the vector $\bm Y/n$ is pulled towards the origin, in the neighbourhood of which the function $f$ is zero. The intensity measure then only concerns the upper tail of the distribution of $\bm Y$. The first expression in the previous display shows that $\mu$ expresses the limiting average intensity in space of the normalized sample cloud $\{\bm Y_1 / n, \ldots, \bm Y_n / n\}$. The stable tail dependence function acts as a distribution function for the intensity measure $\mu$, as we have 
\[
  \ell(\bm x) = \mu \bigl( [0, \infty]^d \setminus [\bm 0, 1/\bm x] \bigr).
\]

\subsection{Extreme profiles}

Assume that the vector of return times $\bm Y = (Y_1, \ldots, Y_d)$ with $Y_j = 1 / \{ 1 - F_j(X_j) \}$ is large, that is, at least one of its components exceeds a high threshold. The relative sizes of the $d$ components then inform us about the extremal dependence: are some components large simultaneously or is one specific component dominating all the other ones? Specifically, for $\bm y \in [0, \infty)^d \setminus \{ 0 \}$ put
\begin{align*}
  r(\bm y) &= y_1 + \cdots + y_d \in (0, \infty), \\
  \bm w(\bm y) &= \bigl( y_j / r(\bm y) : j = 1, \ldots, d \bigr) \in \Delta_{d-1},
\end{align*}
to be thought of as the magnitude and the profile of the vector $\bm y$, respectively. The spectral measure $H$ lives on the unit simplex and is defined by
\[
  H(B) = \mu \bigl( \{ \bm y : r(\bm y) > 1, \, \bm w(\bm y) \in B \} \bigr),
\]
for Borel subsets $B$ of $\Delta_{d-1}$. 


The measure convergence in \eqref{eq:vague2mu} implies that
\begin{multline}
\label{eq:vague2H}
  \expec \biggl[ \sum_{i=1}^n \1 \{ r(\bm Y_i) > n, \, \bm w(\bm Y_i) \in \, \cdot \, \} \biggr] \\
  = n \, \Pr[ r(\bm Y) > n, \, \bm w(\bm Y) \in \, \cdot ]
  \wto H(\,\cdot\,), \qquad n \to \infty,
\end{multline}
with vague convergence being replaced by weak convergence because the state space $\Delta_{d-1}$ is compact. In particular,
\begin{equation}
\label{eq:weak2Q}
  \Pr[ \bm w(\bm Y) \in \, \cdot \mid r(\bm Y) > t ] \dto \frac{H(\,\cdot\,)}{H(\Delta_{d-1})} = Q(\,\cdot\,), \qquad t \to \infty,
\end{equation}
meaning that the conditional distribution of the profile $\bm w(\bm Y)$ given that the magnitude $r(\bm Y)$ is large converges in distribution to the normalized spectral measure $Q$.

The spectral measure $H$ and the profile distribution $Q(\,\cdot\,)$ are alternative, equivalent ways of describing the extreme-value copula $C$. Indeed, homogeneity of $\ell$ in \eqref{eq:ellhomo} implies homogeneity of $\mu$:
\begin{equation}
\label{eq:muhomo}
  \mu(a \, \cdot \,) = a^{-1} \, \mu(\, \cdot \,), \qquad a > 0.
\end{equation}
As a consequence, the intensity measure satisfies
\begin{align}
\label{eq:mu2H:slice}
  \mu \bigl( \{ \bm y : r(\bm y) > z, \, \bm w(\bm y) \in B \} \bigr)
  &= z^{-1} \, \mu \bigl( \{ \bm y \mid r > 1, \, \bm w \in B \} \bigr) \\
  &= z^{-1} \, H(B) \nonumber
\end{align}
for $z > 0$ and for Borel sets $B$ of $\Delta_{d-1}$. That is, when expressing a point $\bm y$ in the coordinates $(r, \bm w)$, the intensity measure $\mu$ factorizes into a product measure on $(0, \infty) \times \Delta_{d-1}$ given by $r^{-2} \diff r \, H(\diff \bm w)$. Equation~\eqref{eq:mu2H:slice} leads to
\begin{equation}
\label{eq:H2mu}
  \int f(\bm y) \, \mu(\diff \bm y) = \int_{\Delta_{d-1}} \int_0^\infty f(r \bm w) \, r^{-2} \, \diff r \, H(\diff \bm w)
\end{equation}
for $\mu$-integrable functions $f$, showing how to recover $\mu$ and thus $\ell$ and $C$ from $H$. The special case where $f$ is equal to the indicator function of the set $\{ \bm y : \max_j x_j y_j > 1 \}$ for some $\bm x \in [0, \infty)^d$ yields, after some computation,
\begin{equation}
\label{eq:H2ell}
  \ell(\bm x) = \int_{\Delta_{d-1}} \max_{j=1, \ldots, d} (w_j x_j) \, H(\diff \bm w).
\end{equation}
Incidentally, this representation of $\ell$ implies that $\ell$ must be convex. By specializing the bounds in \eqref{eq:bounds:ell} to the unit vectors in $\reals^d$, one finds that the spectral measure $H$ must satisfy the constraints
\begin{equation}
\label{eq:constraints}
  1 = \ell(\bm e_j) = \int_{\Delta_{d-1}} w_j \, H(\diff \bm w), \qquad j = 1, \ldots, d.
\end{equation}
It follows that the total mass of $H$ is equal to 
\[ 
  H(\Delta_{d-1}) = \int (w_1 + \cdots + w_d) \, H(\diff \bm w) = d. 
\]
Thanks to this property, it is possible to recover the spectral measure $H$ from the profile distribution $Q$. From \eqref{eq:constraints}, it then follows that a random vector $\bm W = (W_1, \ldots, W_d)$ on $\Delta_{d-1}$ with law equal to $Q$ must satisfy
\begin{equation}
\label{eq:constraints:Q}
  \expec_Q[ W_j ] = \int_{\Delta_{d-1}} w_j \, Q(\diff \bm w) = 1/d, \qquad j = 1, \ldots, d.
\end{equation}
In Section~\ref{sec:models}, we will see that any such law $Q$ can appear as the profile distribution of a $d$-variate max-stable distribution.

In case of asymptotic independence, $\ell(\bm x) = x_1 + \cdots + x_d$, the profile distribution $Q$ is equal to the discrete uniform distribution on the $d$ vertices of $\Delta_{d-1}$: asymptotically, only one component can be large at a time. In the case of asymptotic perfect dependence, $\ell(\bm x) = \max(x_1, \ldots, x_d)$, the profile distribution $Q$ is degenerate at the center $(1/d, \ldots, 1/d)$ of $\Delta_{d-1}$: all components are equally large. These two extreme cases are depicted in Figure~\ref{fig:indepcomonotone}.

\begin{figure}
\begin{center}
\begin{tabular}{cc}
\includegraphics[width=0.45\textwidth]{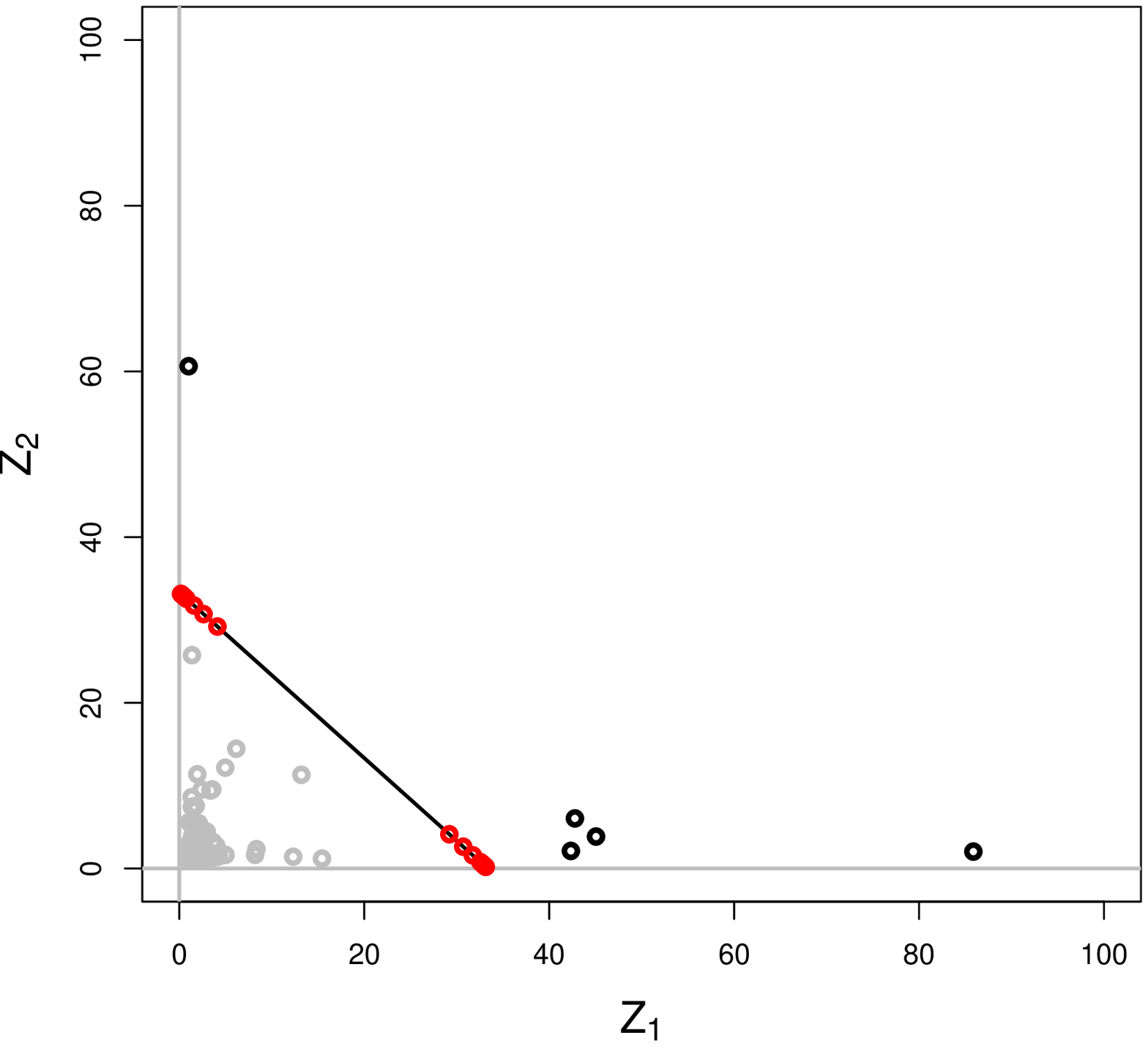}&
\includegraphics[width=0.45\textwidth]{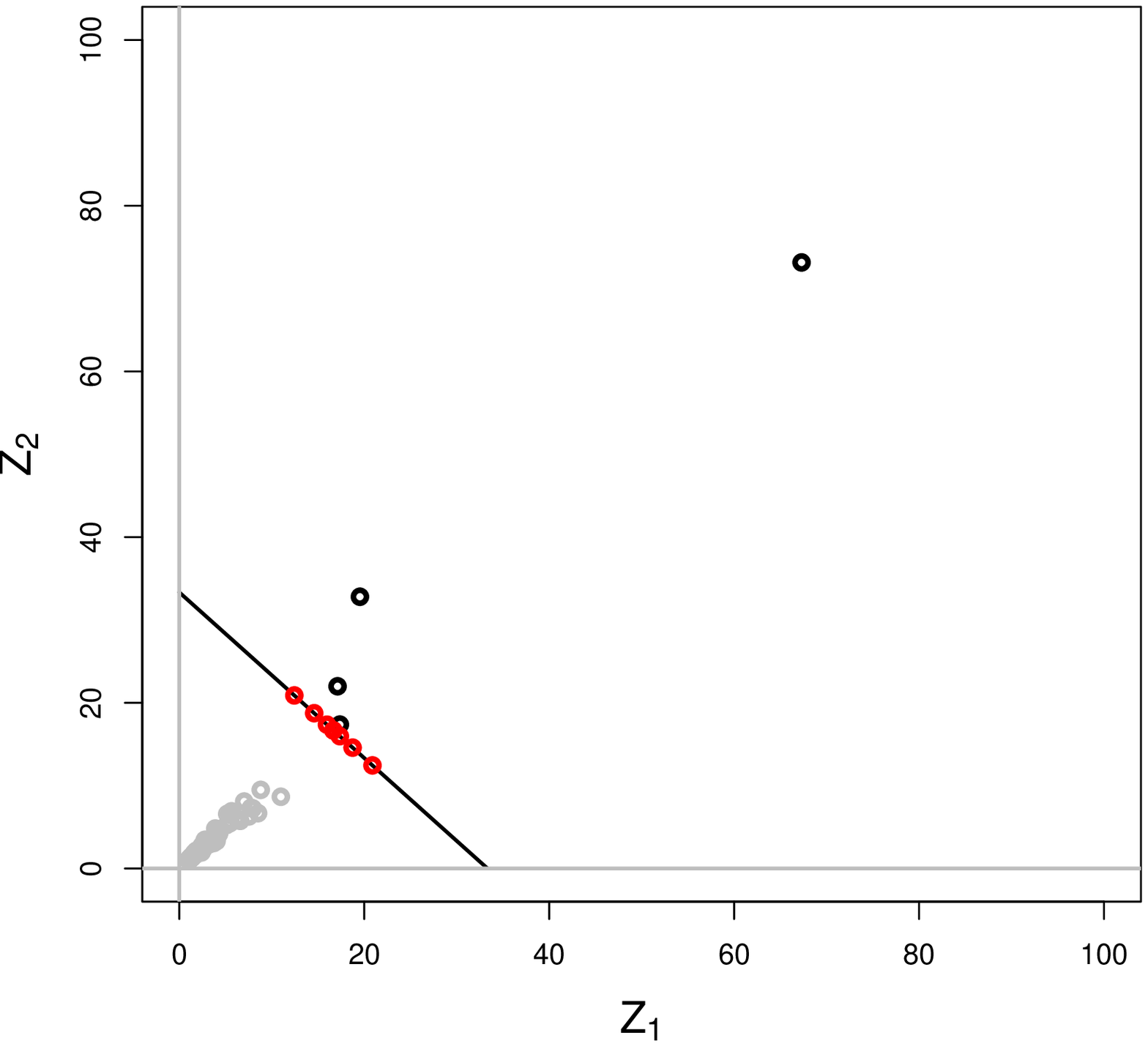}
\end{tabular}
\end{center}
\caption{Asymptotic independence (left) versus asymptotic perfect dependence (right).}
\label{fig:indepcomonotone}
\end{figure}

To show the ease with which coefficients related to extremal dependence can be computed, consider the random variable
\[
  N(t) = \sum_{j=1}^d \1 \{ F_j(X_j) > 1 - 1/t \},
\]
counting the number of components that exceed a high percentile. The following dependence coefficients have natural interpretations.
\begin{itemize}
\item \emph{Trouble is in the air:}
\[
  \lim_{t \to \infty} t \, \Pr[ N(t) \ge 1] = \ell(1, \ldots, 1) = \int_{\Delta_{d-1}} \max(w_1, \ldots, w_d) \, H(\diff \bm w).
\]
\item \emph{Multiple failures:}
\[
  \lim_{t \to \infty} t \, \Pr[ N(t) \ge k] = \int_{\Delta_{d-1}} w_{(d-k+1)} \, H(\diff \bm w),
\]
for $k = 1, \ldots, d$, where $w_{(1)} \le \cdots \le w_{(d)}$ denote the order statistics of $(w_1, \ldots, w_d)$.
\item \emph{The sky is falling:}
\[
  \lim_{t \to \infty} t \, \Pr[ N(t) = d ] = R(1, \ldots, 1) = \int_{\Delta_{d-1}} \min(w_1, \ldots, w_d) \, H(\diff \bm w).
\]
\item \emph{System collapse -- how bad will it get?}
\[
  \lim_{t \to \infty} \expec [ N(t) - k \mid N(t) \ge k ]
  = \frac{\int (w_{(1)} + \cdots + w_{(d-k)}) \, H(\diff \bm w)}{\int w_{(d-k+1)} \, H(\diff \bm w)},
\]
for $k = 1, \ldots, d-1$.
\end{itemize}

\section{CONSTRUCTING MODELS}
\label{sec:models}

There is a fairly large number of parametric max-stable models available; see for instance the overviews in \cite{kotz:nadarajah:2000} and \cite{bgst:2004}. In a search for flexible models in large dimensions, new families are still being constructed, as in \cite{ballani:schlather:2011}, \cite{boldi:davison:2007}, \cite{cooley:davis:naveau:2010} and \cite{fougeres:nolan:rootzen:2009}. In this section, a simple construction device will be proposed and illustrated.

From Section~\ref{sec:fundamentals}, we recall that max-stable models for extremal dependence can be represented either via the extreme-value copula $C$, the stable tail dependence function $\ell$, the intensity measure $\mu$, or the spectral measure $H$ and its normalized version, the profile distribution $Q$. However, as these objects must satisfy certain constraints, construction of parametric models is not obvious, particularly in high dimensions. Even if flexible parametric forms can be found, interpretation of the model parameters may not be obvious. In addition, when working with the spectral measure or profile distribution, the passage to lower-dimensional margins can be awkward, as the conditioning events in \eqref{eq:weak2Q} will be different according to which components are selected.

\subsection{A construction device}


Let $Z$ be a unit Fréchet random variable, that is, $\Pr(Z \le z) = \exp(-1/z)$ for $z > 0$. Let $\bm A = (A_1, \ldots, A_d)$ be a random vector, independent of $Z$, such that $0 < \expec [\max(A_j, 0)] < \infty$ for every $j \in \{1, \ldots, d\}$. Consider the random vector
\begin{equation}
\label{eq:model}
  \bm X = (X_1, \ldots, X_d) = (A_1 Z, \ldots, A_d Z).
\end{equation}
The max-stable attractor of the distribution of $\bm X$ can be explicitly calculated.

\begin{lemma}
For $x = (x_1, \ldots, x_d) \in (0, \infty)^d$, we have
\begin{multline}
\label{eq:recipe:G}
  \lim_{n \to \infty} \Pr[ X_1 \le nx_1, \ldots, X_d \le nx_d ]^n \\
  = \exp \{ - \expec [ \max(A_1/x_1, \ldots, A_d/x_d, 0) ] \}.
\end{multline}
\end{lemma}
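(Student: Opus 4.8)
The plan is to condition on the random vector $\bm A$, reduce the joint event to a single inequality for $Z$, and then pass to the limit by dominated convergence.

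First I would fix $\bm x \in (0, \infty)^d$ and, conditionally on $\bm A = \bm a$, rewrite the event $\{X_1 \le n x_1, \ldots, X_d \le n x_d\}$. Since $Z > 0$ almost surely and each $x_j > 0$, the constraint $a_j Z \le n x_j$ holds automatically whenever $a_j \le 0$, while for $a_j > 0$ it is equivalent to $Z \le n x_j / a_j$. Hence the whole event equals $\{ Z \le n / M \}$ with
\[
  M = M(\bm A, \bm x) = \max \bigl( A_1 / x_1, \ldots, A_d / x_d, 0 \bigr) \ge 0,
\]
where the trailing $0$ in the maximum records the case that all constraints are vacuous (then $n/M = +\infty$ and the probability is $1$, consistent with $\exp(-M/n) = 1$). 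Using $\Pr[Z \le z] = \exp(-1/z)$ together with the independence of $Z$ and $\bm A$,
\[
  \Pr[ X_1 \le n x_1, \ldots, X_d \le n x_d ] = \expec \bigl[ \exp(-M/n) \bigr].
\]

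Next I would raise this to the $n$th power and write $\expec[\exp(-M/n)]^n = (1 - c_n/n)^n$ with $c_n = n \, \expec[ 1 - \exp(-M/n) ] = \expec[\, n(1 - e^{-M/n}) \,]$. Pointwise, $n(1 - e^{-M/n}) \to M$ as $n \to \infty$; moreover the elementary inequality $1 - e^{-t} \le t$ for $t \ge 0$ gives the domination $n(1 - e^{-M/n}) \le M$, and $M \le \sum_{j=1}^d \max(A_j, 0) / x_j$ is integrable because $\expec[\max(A_j, 0)] < \infty$ for each $j$. By dominated convergence, $c_n \to \expec[M]$, and $\expec[M] \in (0, \infty)$ since $0 < \expec[\max(A_1,0)]/x_1 \le \expec[M] \le \sum_{j} \expec[\max(A_j,0)]/x_j < \infty$. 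Finally, as $c_n$ is a bounded sequence converging to $\expec[M]$, taking logarithms shows $(1 - c_n/n)^n \to e^{-\expec[M]}$, so
\[
  \lim_{n \to \infty} \Pr[ X_1 \le n x_1, \ldots, X_d \le n x_d ]^n
  = \exp \bigl( - \expec[M] \bigr)
  = \exp \bigl\{ - \expec [ \max(A_1/x_1, \ldots, A_d/x_d, 0) ] \bigr\},
\]
which is the claim.

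I do not expect a genuine obstacle here. The only points needing care are the sign bookkeeping in the first step — in particular checking that the ``$\max$ with $0$'' simultaneously captures the vacuous-constraint case and the edge case $M = 0$ — and identifying the dominating function with the one furnished by the moment hypothesis, which is precisely why that hypothesis is imposed. One could replace the final step by Bernoulli-type bounds $1 - c_n/n \le e^{-c_n/n}$ combined with a matching lower bound, but the logarithm argument is the cleanest.
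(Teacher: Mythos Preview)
Your proof is correct. The first step---conditioning on $\bm A$, reducing the joint event to $\{Z \le n/M\}$ with $M = \max(A_1/x_1,\ldots,A_d/x_d,0)$, and obtaining $\Pr[X_1 \le nx_1,\ldots,X_d \le nx_d] = \expec[e^{-M/n}]$---is exactly what the paper does.

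The second step differs. The paper introduces i.i.d.\ copies $\bm A^{(1)},\ldots,\bm A^{(n)}$ of $\bm A$ and rewrites the $n$th power of the expectation as a single expectation,
\[
  \bigl(\expec[e^{-M/n}]\bigr)^n
  = \expec\Bigl[\exp\Bigl\{-\tfrac{1}{n}\textstyle\sum_{i=1}^n M^{(i)}\Bigr\}\Bigr],
\]
and then invokes the law of large numbers together with dominated convergence (the integrand being bounded by $1$). You instead work analytically: writing $\bigl(\expec[e^{-M/n}]\bigr)^n = (1 - c_n/n)^n$ with $c_n = \expec[\,n(1-e^{-M/n})\,]$ and passing to the limit by dominated convergence with $M$ itself as the majorant. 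Both routes are sound; the paper's version foregrounds the probabilistic averaging picture, while yours is a direct Laplace-type expansion that avoids introducing auxiliary random variables and makes explicit exactly where the integrability hypothesis $\expec[\max(A_j,0)]<\infty$ is consumed.
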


\begin{proof}
Let $x \in (0, \infty)^d$. We have
\begin{align*}
  \Pr[ X_1 \le x_1, \ldots, X_d \le x_d ]
  &= \Pr[ A_1 Z \le x_1, \ldots, A_d Z \le x_d ] \\
  &= \Pr[ A_1/x_1 \le 1/Z, \ldots, A_d/x_d \le 1/Z ] \\
  &= \Pr[ 1/Z \ge \max(A_1/x_1, \ldots, A_d/x_d) ]
\end{align*}
The distribution of $1/Z$ is unit exponential. Since $A$ and $Z$ are independent,
\[
  \Pr[ X_1 \le x_1, \ldots, X_d \le x_d \mid A_1, \ldots, A_d]
  = \exp \{ - \max(A_1/x_1, \ldots, A_d/x_d, 0) \}.
\]
It follows that
\[
  \Pr[ X_1 \le x_1, \ldots, X_d \le x_d ]
  = \expec [ \exp \{ - \max(A_1/x_1, \ldots, A_d/x_d, 0) \} ].
\]

Let $A^{(1)}, A^{(2)}, \ldots$ be a sequence of independent and identically distributed copies of $A$. Fix positive integer $n$. We have
\begin{align*}
  \lefteqn{
  \Pr[ X_1 \le n x_1, \ldots, X_d \le n x_d ]^n
  } \\
  &= \bigl( \expec [ \exp \{ - \max(A_1/x_1, \ldots, A_d/x_d, 0) \} ] \bigr)^n \\
  &= \expec \biggl[ \exp \biggl\{ - \frac{1}{n} \sum_{i=1}^n \max(A_1^{(i)}/x_1, \ldots, A_d^{(i)}/x_d, 0) \biggr\} \biggr].
\end{align*}
Equation~\eqref{eq:recipe:G} now follows by the law of large numbers and the dominated convergence theorem.
\end{proof}

The margins of the limit distribution function, say $G$, in \eqref{eq:recipe:G} are equal to $G_j(x_j) = \exp \{ - \expec[ \max(A_j, 0) ] / x_j \}$ for $x_j > 0$. Assume that $\expec[ \max(A_j, 0) ] = 1$ for all $j = 1, \ldots, d$; this can always be achieved by rescaling the variables $A_j$ if necessary. In that case, the margins of $G$ are unit Fréchet. Comparing equations~\eqref{eq:recipe:G} and~\eqref{eq:C:frechet}, we find that the stable tail dependence function of $G$ is given by
\begin{equation}
\label{eq:recipe:ell}
  \ell_{\bm A}(x_1, \ldots, x_d) = \expec[ \max(x_1 A_1, \ldots, x_d A_d, 0) ]
\end{equation}


The spectral measure $H$ corresponding to $\ell$ in \eqref{eq:recipe:ell} can be identified too. Write $A_j^+ = \max(A_j, 0)$ and put $R = A_1^+ + \cdots + A_d^+$. On the event $R > 0$, define $W_j = A_j^+ / R$; on the event $R = 0$, the definition of $W_j$ is immaterial -- for definiteness, put $W_j = 1/d$ if $R = 0$. Note that $\bm W$ takes values in $\Delta_{d-1}$ and that $\expec[R] = \sum_{j=1}^d \expec[A_j^+] = d$. We have $A_j^+ = R W_j$ and thus
\begin{align*}
  \ell_{\bm A}(x_1, \ldots, x_d) 
  &= \expec [ R \, \max(W_1 x_1, \ldots, W_d x_d) ] \\
  &= \expec \bigl[ \expec[R \mid W_1, \ldots, W_d] \, \max(W_1 x_1, \ldots, W_d x_d) \bigr].
\end{align*}
Comparing this expression with \eqref{eq:H2ell}, we find that the spectral measure $H$ of $\ell_{\bm A}$ is given by
\begin{equation}
\label{eq:W2H}
  H(\diff \bm w) = \expec[R \mid \bm W = \bm w] \, \Pr[\bm W \in \diff \bm w],
\end{equation}
that is, $H$ is absolutely continuous with respect to the law of $\bm W$ with Radon-Nikodym derivative equal to $\expec[R \mid \bm W = \bm w]$. Similarly, the profile distribution $Q$ satisfies
\begin{equation}
\label{eq:W2Q}
  Q(\diff \bm w) = d^{-1} \expec[R \mid \bm W = \bm w] \, \Pr[\bm W \in \diff \bm w],
\end{equation}
Intuitively, this makes sense: profiles $\bm W$ that on average yield larger values of $R$ will have a larger contribution to the joint extremes of $\bm X$.

Incidentally, this construction shows that any probability distribution $Q$ on $\Delta_{d-1}$ satisfying \eqref{eq:constraints:Q} can appear as the profile distribution of a $d$-variate max-stable distribution. Indeed, let the random vector $\bm W$ on $\Delta_{d-1}$ have law $Q$ and put $A_j = d W_j$ for all $j = 1, \ldots, d$. As $A_1 + \cdots + A_d = d$ by construction, the law of the random vector $\bm X$ in \eqref{eq:model} is in the domain of attraction of a $d$-variate max-stable distribution with profile distribution equal to $Q$.

If the dimension $d$ is large, realistic models of extremal dependence should allow for the possibility that only some but not all components of a random vector are large simultaneously. In terms of the spectral measure or the profile distribution, this is encoded by the lower-dimensional faces of the unit simplex. For a non-empty subset $I$ of $\{1, \ldots, d\}$, let $\Delta_{d-1,I}$ denote the set of all $\bm w$ in $\Delta_{d-1}$ such that $w_j > 0$ if $j \in I$ and $w_j = 0$ otherwise. If the probability of the event $\{ \min_{j \in I} A_j > 0 \ge \max_{j \in I^c} A_j \}$ is non-zero, then by \eqref{eq:W2H}, the spectral measure and the profile distribution will put positive mass on $\Delta_{d-1, I}$. The set $I$ contains the indices of the components that are large.

\subsection{Examples}

The simplicity of \eqref{eq:recipe:ell} is appealing and the reader is invited to the apply the recipe in order to produce his or her own parametric models. In the remainder of the article, a number of well-known examples are worked out.

\begin{example}[Independence]
Suppose that $\Pr[ A_j > 0 \ge \max(A_i : i \neq j) ] = p_j$ for $p_j > 0$ and $p_1 + \cdots + p_d = 1$. Then only component can be large at the time. After standardization ensuring that $\expec[ \max(A_j, 0) ] = 1$ for all $j = 1, \ldots, d$, we find $\ell_{\bm A}(x_1, \ldots, x_d) = \expec[ \max(x_1 A_1, \ldots, x_d A_d, 0) ] = x_1 + \cdots + x_d$, the stable tail dependence function of the independence copula.
\end{example}

\begin{example}[Perfect dependence]
Suppose that $A_j = a_j B$ with probability one for all $j = 1, \ldots, d$, for some constants $a_j > 0$ and a random variable $B$ such that $\expec[ \max(B, 0) ]$ is positive and finite. Then the profile of an extreme vector is fixed. After standardization, $a_j = 1 / \expec[ \max(B, 0) ]$, the stable tail dependence function is that of perfect positive association, that is,  $\ell_{\bm A}(x_1, \ldots, x_d) = \max(x_1, \ldots, x_d)$.
\end{example}

\begin{example}[Discrete spectral measures]
\label{ex:discrete}
Suppose that the distribution of $\bm A$ is discrete with a finite number of atoms. Specifically, suppose that $\Pr[\bm A = \bm a_k] = p_k$ for $k \in \{1, \ldots, m\}$, where $\bm a_k \in \reals^d$ and $p_k \in (0, 1)$ such that $\sum_{k=1}^m p_k = 1$. Via standardization, ensure that $1 = \expec[ \max(A_j, 0) ] = \sum_{k=1}^m p_k a_{kj}^+$ for all $j = 1, \ldots, d$. Put $r_k = a_{k1}^+ + \cdots + a_{kd}^+$ and write $\bm a_k^+ = r_k \bm w_k$: if $r_k > 0$, then $w_{kj} = a_{kj}^+/r_k$, whereas if $r_k = 0$, then put $w_{kj} = 1/d$. It follows that
\begin{align*}
  \ell_{\bm A}(x_1, \ldots, x_d) 
  &= \sum_{k=1}^m p_k \max(a_{k1}^+ x_1, \ldots, a_{kd}^+ x_d) \\
  &= \sum_{k=1}^m (p_k r_k) \max(w_{k1} x_1, \ldots, w_{kd} x_d).
\end{align*}
We find that the spectral measure $H$ and the profile distribution $Q$ are discrete and are given by
\begin{align*}
  H &= \sum_{k=1}^m p_k r_k \delta_{\bm w_k}, &
  Q &= \sum_{k=1}^m d^{-1} p_k r_k \delta_{\bm w_k},
\end{align*}
with $\delta_{\bm w}$ a unit point mass at $\bm w$. The probabilities $p_k$ are tilted with the magnitudes $r_k$, giving higher prominence to profiles $\bm w_k$ that are associated to larger values of $r_k$.

Max-stable models with discrete spectral measures are called extreme-value factor models in \cite{einmahl:krajina:segers:2011}. Each of the $m$ possible outcomes $\bm a_k$ results in a different profile, according to the type of event or factor that triggered the extreme value.
\end{example}

\begin{example}[Random indicators]
Let 
\[
  \ell(x, y) = \expec[ \max( xA, yB , 0) ], \qquad (x, y) \in [0, \infty)^2,
\]
in terms of random variables $A$ and $B$ such that $\expec[ \max(A, 0) ] = \expec[ \max(B, 0) ] = 1$. Let $(I, J)$ be a pair of random indicators, independent of the pair $(A, B)$, such that $\Pr[ I = 1 ] = p$, $\Pr[ J = 1 ] = q$, and $\Pr[ I = J = 1] = r$. In the definition of $\ell$, replace the pair $(A, B)$ by the pair $(p^{-1} I A, q^{-1} J B)$; we assume that $0 < p \le 1$ and $0 < q \le 1$. The new stable tail dependence function is equal to
\begin{align*}
  \lefteqn{
  \expec[ \max( p^{-1} x I A, q^{-1} y J B, 0 ) ]
  } \\
  &= \expec[ \max( p^{-1} x A, q^{-1} y B ) ] \, \Pr[I = J = 1] \\
  &\quad \mbox{} + \expec[ \max( p^{-1} x A, 0 ) ] \, \Pr[I = 1, J = 0] + \expec[ \max( q^{-1} y B, 0 ) ] \, \Pr[I = 0, J = 1] \\
  &= \ell(p^{-1} x, q^{-1} y) \, r + p^{-1} x \, (p - r) + q^{-1} y \, (q - r)
\end{align*}
Writing $\alpha = r/p = \Pr[J = 1 \mid I = 1]$ and $\beta = r/q = \Pr[I = 1 \mid J = 1]$, we find
\begin{equation}
\label{eq:indicator:asymmetric}
  \ell_{\alpha, \beta}(x, y) = \ell(\alpha x, \beta y) + (1 - \alpha) x + (1 - \beta) y, \qquad (x, y) \in [0, \infty)^2.
\end{equation}
The new tail copula is simply
\begin{align*}
  R_{\alpha, \beta}(x, y) 
  &= x + y - \ell_{\alpha, \beta}(x, y) \\
  &= \alpha x + \beta y - \ell(\alpha x, \beta y)
  = R(\alpha x, \beta y), \qquad (x, y) \in [0, \infty)^2.
\end{align*}
This is an asymmetric, two-parameter extension of the original model. Imposing the equality constraint $\alpha = \beta = \theta \in [0, 1]$ yields the symmetric, one-parameter extension
\begin{equation}
\label{eq:indicator:symmetric}
  \ell_\theta(x, y) = \theta \, \ell(x, y) + (1 - \theta) \, (x + y).
\end{equation}


In higher dimensions, a vector of indicators $(I_1, \ldots, I_d)$ can serve to switch some components $X_j$ `on' or `off'. The dependence structure in these indicators then yields an extremal dependence structure for the random vector $X$. Specifically, let $p_j = \Pr[ I_j = 1 ]$; we assume $0 < p_j \le 1$. Let $\bm A = (A_1, \ldots, A_d)$ be a random vector independent of $(I_1, \ldots, I_d)$ and such that $\expec[ \max( A_j, 0 ) ] = 1$ for all $j = 1, \ldots, d$. Then we can define a stable tail dependence function via
\begin{align}
\label{eq:indicator:mv}
  \ell_{\bm p} (x_1, \ldots, x_d)
  &= \expec[ \max (p_1^{-1} x_1 I_1 A_1, \ldots, p_d^{-1} x_d I_d A_d, 0 ) ] \\
  &= \sum_{\varnothing \ne c \subset \{1, \ldots, d\}} p(c) \, \expec[ \max (p_j^{-1} x_j A_j : j \in c) ] \nonumber
\end{align}
where $p(c) = \Pr[ \{ j = 1, \ldots, d : I_j = 1 \} = c ]$. Keeping the law of $\bm A$ fixed, the parameter vector $\bm p$ is equal to a probability distribution $(p(c))_c$ on the non-empty subsets $c$ of $\{1, \ldots, d\}$.

In this way, hierarchical structures can easily be built. For instance, in dimension $d = 4$, we can think of $(I_1, I_2, I_3, I_4) = (J, J, K, K)$. One can also think of logit-type models for the indicators.
\end{example}

\begin{example}[Marshall--Olkin copula]
Applying the device in \eqref{eq:indicator:asymmetric} to the function $\ell(x, y) = \max(x, y)$ yields the model
\[
  \ell_{\alpha, \beta}(x, y) = \max (\alpha x, \beta y) + (1 - \alpha) x + (1 - \beta) y = x + y - \min(\alpha x, \beta y).
\]
The extreme-value copula associated to $\ell_{\alpha, \beta}$ is the Marshall--Olkin copula
\[
  C_{\alpha, \beta}(u, v) = u v \, \min(u^{-\alpha}, v^{-\beta}) = \min(u^{1-\alpha} v, \, u v^{1-\beta}), \qquad (u, v) \in [0, 1]^2.
\]
In higher dimensions, applying the device \eqref{eq:indicator:mv} to the function $\ell( \bm x ) = \max( \bm x )$, that is, $A_j = 1$ with probability one, we get the model
\[
  \ell_{\bm p}(x_1, \ldots, x_d) = \sum_{\varnothing \ne c \subset \{1, \ldots, d\}} p(c) \, \max (p_j^{-1} x_j : j \in c),
\]
the spectral measure of which is discrete. Another stochastic interpretation of this model is provided in \cite{embrechts:lindskog:mcneil:2003}.
\end{example}

\begin{example}[Dirichlet model]
\label{ex:Dirichlet}
Let $\alpha_1, \ldots, \alpha_d$ be positive parameters and let $Z_1, \ldots, Z_d$ be independent Gamma($\alpha_j, 1$) random variables, that is, the density of $Z_j$ is
\[
  f_j(z) = \frac{1}{\Gamma(\alpha_j)} z^{\alpha_j-1} e^{-z}, \qquad 0 < z < \infty.
\]
Put $A_j = \alpha_j^{-1} Z_j$, a positive random variable with unit expectation. The stable tail dependence function associated with the random vector $\bm A = (A_1, \ldots, A_d)$ is
\begin{align*}
  \ell_{\bm A}(x_1, \ldots, x_d)
  &= \expec [ \max( x_1 A_1, \ldots, x_d A_d ) ] \\
  &= \expec [ \max( \alpha_1^{-1} x_1 Z_1, \ldots, \alpha_d^{-1} x_d Z_d ) ] \\
  &= \expec [ (Z_1 + \cdots + Z_d) \, \max( \alpha_1^{-1} x_1 V_1, \ldots, \alpha_d^{-1} x_d V_d ) ],
\end{align*}
where $V_j = Z_j / (Z_1 + \cdots + Z_d)$. It is well known that the random vector $(V_1, \ldots, V_d)$ is independent of $Z_1 + \cdots + Z_d$ and has a Dirichlet distribution with parameters $(\alpha_1, \ldots, \alpha_d)$. We find
\begin{align}
\label{eq:dirichlet}
  \ell_{\bm A}( \bm x ) 
  &= \expec [ (\alpha_1 + \cdots + \alpha_d) \, \max( \alpha_1^{-1} x_1 V_1, \ldots, \alpha_d^{-1} x_d V_d ) ] \\
  &= \frac{\Gamma(\sum_{j=1}^d \alpha_j + 1)}{\prod_{j=1}^d \Gamma(\alpha_j)}
  \int_{\Delta_{d-1}} \max_{j=1, \ldots, d} (\alpha_j^{-1} x_j v_j) \; \prod_{j=1}^d v_j^{\alpha_j-1} \; \diff v_1 \, \cdots \, \diff v_{d-1}. \nonumber
\end{align}
We recognize the Dirichlet model introduced in \cite{coles:tawn:1991}.
\end{example}

\begin{example}[Polynomial Pickands functions]
In the Dirichlet model \eqref{eq:dirichlet}, put $d = 2$ and $\alpha_1 = \alpha_2 = 1$, to obtain
\[
  \ell(x, y) = 2 \int_0^1 \max \{ x v, y (1-v) \} \, \diff v = x + y - \frac{xy}{x+y}.
\]
The Pickands dependence function associated to $\ell$ is $D(t) = \ell(1-t, t) = 1 - t(1-t)$ for $t \in [0, 1]$. Applying the transformation in \eqref{eq:indicator:symmetric} yields the mixture model
\[
  D(t) = 1 - \theta \, t(1-t), \qquad t \in [0, 1],
\]
in terms of a parameter $\theta \in [0, 1]$, introduced in \cite{tawn:1988}. This is the only model in which $D$ is a quadratic polynomial. Applying the transformation in \eqref{eq:indicator:asymmetric} yields the rational model
\begin{align*}
  D_{\alpha, \beta}(t) 
  &= \ell( \alpha (1-t), \beta t ) + (1 - \alpha) (1-t) + (1 - \beta) t \\
  &= 1 - \frac{\alpha \beta t (1-t)}{\alpha(1 - t) + \beta t}, \qquad t \in [0, 1],
\end{align*}
in terms of parameters $\alpha, \beta \in (0, 1]$.

More generally, bivariate max-stable models of which the Pickands dependence function is a higher-order polynomial can be obtained via the transformation in \eqref{eq:indicator:symmetric} applied to the function $\ell(x, y) = \expec[ \max( xA, yB ) ]$ when $A$ and $B$ are random sums
\begin{align*}
  A &= E_1 + \cdots + E_J, & B &= F_1 + \cdots + F_K,
\end{align*}
in terms of independent random variables $J, K, E_1, F_1, E_2, F_2, \ldots$ such that $J$ and $K$ are counting random variables having finite support and unit expectation and where $E_1, F_1, E_2, F_2, \ldots$ are unit exponential random variables. Polynomial Pickands dependence functions are studied in \cite{guillotte:perron:2012}.
\end{example}

\begin{example}[Schlather model]
Let $(S, T)$ be a bivariate normal random vector with standard normal margins and with correlation $\rho \in (-1, 1)$. Put $A = \sqrt{2\pi} S$ and $B = \sqrt{2\pi} T$, so that $\expec[ \max(A, 0) ] = \expec[ \max(B, 0) ] = 1$. The stable tail dependence function corresponding to the random pair $(A, B)$ is
\begin{align*}
  \ell_\rho(x, y) 
  &= \expec[ \max( xA, yB, 0 ) ] \\
  &= \int_{s = -\infty}^0 \int_{t = 0}^\infty \sqrt{2\pi} y t \, \frac{1}{2 \pi} e^{-(s^2 + 2\rho s t + t^2)/2} \, \diff s \, \diff t \\
  &\quad \mbox{} + \int_{s = 0}^\infty \int_{t = -\infty}^0 \sqrt{2\pi} x s \, \frac{1}{2 \pi} e^{-(s^2 + 2\rho s t + t^2)/2} \, \diff s \, \diff t \\
  &\quad \mbox{} + \int_0^\infty \int_0^\infty \sqrt{2\pi} \max(xs, yt) \, \, \frac{1}{2 \pi} e^{-(s^2 + 2\rho s t + t^2)/2} \, \diff s \, \diff t.
\end{align*}
After some tedious calculations, this can be shown to be
\[
  \ell_\rho(x, y) = \frac{1}{2} (x + y) \biggl(1 + \sqrt{1 - 2 (\rho+1) \frac{xy}{(x+y)^2}} \biggr),
\]
see \cite{schlather:2002}. The Pickands dependence function corresponding to this model is
\[
  D_\rho(t) = \frac{1}{2} \bigl\{ 1 + \sqrt{ 1 - 2 (\rho+1) t(1-t) } \bigr\}, \qquad t \in [0, 1].
\]
\end{example}


\begin{example}[Hüsler--Reiss model]
Let $(S, T)$ be a bivariate normal random vector with standard normal margins and with correlation $\rho \in (-1, 1)$. Let $\sigma > 0$ and put
\begin{align*}
  A &= \exp( \sigma S - \sigma^2/2 ), & B &= \exp( \sigma T - \sigma^2/2 ).
\end{align*}
The pair $(A, B)$ is lognormal with unit expectations, and it yields the stable tail dependence function
\begin{align*}
  \ell(x, y)
  = \int_{-\infty}^\infty \int_{-\infty}^\infty \max \{ x \, e^{\sigma s}, \, y \, e^{\sigma t} \} e^{- \sigma^2/2} \, \frac{1}{2\pi} e^{-(s^2 + 2\rho st + t^2)/2} \, \diff s \, \diff t.
\end{align*}
The double integral can be calculated in terms of the standard normal cumulative distribution function $\Phi$, yielding
\[
  \ell_a(x, y) = x \, \Phi \biggl( \frac{a}{2} + \frac{1}{a} \log(x/y) \biggr) + y \, \Phi \biggl( \frac{a}{2} + \frac{1}{a} \log(y/x) \biggr)
\]
with parameter $a = \sigma \sqrt{2(1-\rho)}$. This is the model introduced in \cite{husler:reiss:1989}.

\end{example}

\begin{acknowledgments}
The author wishes to thank the journal Editors for the invitation to write this paper and for a detailed proofreading of an earlier version of the manuscript. The paper originates from a lecture given at the workshop on Copula Models and Dependence at the Centre de Recherches Mathématiques, June 6--9, 2011, in Montréal. Organizers and participants of the workshop are gratefully acknowledged for their support and their encouraging discussions, from which the paper has strongly benefited. 

The author's research was supported by IAP research network grant No.\ P6/03 of the Belgian government (Belgian Science Policy) and by contract No.\ 07/12/002 of the Projet d'Actions de Recherche Concert\'ees of the Communaut\'e fran\c{c}aise de Belgique, granted by the Acad\'emie universitaire Louvain.
\end{acknowledgments}

\bibliographystyle{chicago}
\bibliography{biblio}

\end{document}